  \newtheorem{theorem}{Theorem}
  \newtheorem{lemma}[theorem]{Lemma}
  \newtheorem{definition}[theorem]{Definition}
  \newtheorem{remark}[theorem]{Remark}
\begin{document}
 \title{Improved duality estimates: time discrete case and applications to a class of cross-diffusion systems.}

          \author{Thomas Lepoutre\thanks{Univ Lyon, Inria, Université Claude Bernard Lyon 1, CNRS UMR5208, Institut Camille Jordan, F-69603 Villeurbanne, France, (thomas.lepoutre@inria.fr). http://math.univ-lyon1.fr/homes-www/lepoutre/index.html}
          }

         \maketitle

          \begin{abstract}
             We adapt the improved duality estimates for bounded coefficients derived by Canizo et al. to the framework of cross diffusion. Since the estimates can not be directly applied we need to derive a time discrete version of their results and apply it  to an implicit semi-discretization in time of the cross diffusion systems. This leads to new global existence results for cross diffusion systems with bounded cross diffusion pressures and potentially superquadratic reaction. 
          \end{abstract}

\section{Introduction.}\label{sec:intro}

The following manuscript is devoted to the adaptation of improved duality estimates introduced in \cite{canizo2014improved} to a time discrete setting. As an application we extend some recent existence results in cross-diffusion-reaction models in Laplace form. 
Cross-diffusion appears in ecology, chemistry or semiconductor modelling. The systems we have in mind have been introduced by Shigesada Kawasaki and Teramoto in \cite{Shigesada1979} and have given birth to a large literature. The original system has the form ($\Omega$ is a smooth bounded domain)
\begin{align}\label{eq:SKT}
\begin{cases}
\partial_t u-\Delta \left(d_1+a_{11}u+a_{12}v\right)u=u(R_1-r_{11}u-r_{12}v),\\
\partial_t v-\Delta \left(d_2+a_{21}u+a_{22}v\right)v=v(R_2-r_{21}u-r_{22}v),\\
\partial_nu=\partial_nv =0,\quad \text{ on }\partial\Omega.
\end{cases}
\end{align}
For the most part, the local existence and uniqueness of strong solutions in a very general settings stem from the seminal work of Herbert Amann \cite{amann90b}. Passing from local to global existence for \eqref{eq:SKT} remains an open problems except if strong additional structural assumptions are added on the coefficients $a_{ij}$ see \cite{Yagi,2015_article_HoangNP_SJMA}. We are here interested in the question of weak solutions for such systems. Such solutions have been studied by Jungel and coauthors for \eqref{eq:SKT} \cite{Galiano_Num_Math,Chen2004,Chen2006} through the fundamental remark that the original system possesses an entropic structure. Such solutions are global in time (but uniqueness is lost except in very specific situations \cite{chen2017note}).  This entropic structure has been considerably exploited and generalized in several complementary directions. The first one has been introduced by Burger and coauthors in \cite{burger} and generalized in \cite{jungbound} consists in considering systems in which the gradient dissipation implied in the entropy dissipation leads to boundedness. This is the so called boundedness by entropy principle. If boundedness can not be obtained, one might need additional estimates besides the entropy dissipation. To solve this, a direction that has been considered since \cite{Lepoutre_JMPA} is based on the parabolic structure of  cross-diffusion models with a Laplace structure, namely of the form
\begin{align}
\label{eq:cross_diff_laplace}
\begin{cases}
\partial_tu_i-\Delta p_i(U)u_i=u_ir_i^-(U)+r_i^+(U)=R_i(U), \quad \text{in }\Omega, \, 1\leq i\leq I,\\
\partial_n(p_i(U)u_i)=0.
\end{cases}
\end{align}
For such systems, one can have additional estimates derived in the context of reaction-diffusion systems in \cite{PiSc}, applied to cross-diffusion models in \cite{PierreD} for a specific case and in a more systematic way in \cite{Lepoutre_JMPA,DesLepMou,Lepoutre_Desvillettes_Moussa_Trescases,lepoutre2017entropic}. This can be resumed in the fact that under general hypothesis we can deduce an a priori bounds on $\|U\|_{L^2(Q_T)}$ ($Q_T$ standing for $\Omega\times (0,T)$). This comes from a very elegant duality argument or direct time integration of the equation. 

In the context of chemical reaction diffusion systems, these duality estimates have been considerably precised by Canizo et al. in \cite{canizo2014improved} in the specific case of bounded diffusion coefficients. Their results insures  $L^p(Q_T)$ estimates where $p>2$ depends on the domain, and the lower and upper bound of the coefficients. It has been applied to several situations concerning systems with constant but species-dependent diffusion coefficients \cite{canizo2014improved,fellner2016global} with an infinite number of species \cite{breden2017smoothness} or in the cross-diffusion triangular setting \cite{DesTres}.

\textbf{Approximation difficulty}. It has been noticed in former works \cite{DesLepMou,Lepoutre_Desvillettes_Moussa_Trescases,lepoutre2017entropic} that the entropic structure and the duality estimate are of different nature and that it is therefore a nontrivial problem to build solutions respecting both structures. In case of reaction diffusion system, a truncation of  nonlinearities allows to apply estimate to smooth approximation that are robustly preserving the estimates. For cross diffusion systems, this is known to be more difficult. One of the most commonly used way of building solutions is a time discretization. The approximation scheme based on entropic variables and exploited in \cite{Chen2006} and \cite{burger} is generally not suitable if one wants to keep the duality estimate at the limit. An implicit in time discretization scheme (also called Rothe method) has been developed in  \cite{Lepoutre_Desvillettes_Moussa_Trescases} and generalized in \cite{lepoutre2017entropic}, and it has proved to be an approach combining the power of both structures. In the cases $p=2$ there is no additional cost (the constant involved in the estimates are the same). For $L^p, p>2$ this is not the case. We do not keep the optimal constants throughout the process and therefore cannot ensure that we can reach the optimal $L^p$ integrability. On the other hand we are still in position of ensuring better than $L^2$ integrability in the cases of bounded $p_i$. In this manuscript, we show that a discrete version of the estimates derived in \cite{canizo2014improved} also applies to this approximation procedure, allowing extension of existence results to a larger class of reaction terms in case of bounded diffusion (from above and below) $p_i$. 

The paper is organized as follows:
In section~\ref{sec:main}, we remind the structural hypothesis we make on system \eqref{eq:cross_diff_laplace}, remind the duality estimates from \cite{canizo2014improved}. We then give the time discrete equivalent and state our main results on \eqref{eq:cross_diff_laplace} in the case of bounded $p_i$. In section~\ref{sec:duality}, we establish the proof of time-discrete estimates and their consequences for semi-discretization of parabolic equations. In section~\ref{sec:cross_appli}, we illustrate our results through simple examples.

\section{Framework and statement of the main results.}\label{sec:main}

\subsection{Preliminary hypothesis on \eqref{eq:cross_diff_laplace}.}

\textbf{Structural hypothesis on \eqref{eq:cross_diff_laplace}.}
Concerning systems of the form \eqref{eq:cross_diff_laplace}, 
A vectorial notation is then the following $U=(u_i)_{1\leq i\leq I}, A(U)=(p_i(U)u_i)_{1\leq i\leq I}$.
\begin{align}\label{eq:cross_diff_laplace_vect}
\partial_t U-\Delta A(U)=R(U).
\end{align}
And the divergence form is the following 
\begin{align}\label{eq:cross_diff_laplace_div}
\partial_t U-div\left(DA(U)\nabla U\right)=R(U).
\end{align}
We will make the following structural hypothesis (see \cite{Lepoutre_Desvillettes_Moussa_Trescases} and \cite{lepoutre2017entropic} for more details).
\paragraph{Regularity assumption on the coefficients}\

In what follows we will make the following hypothesis 
\begin{align}\label{eq:p_i_reg}
p_i\in C^0(\mathbb{R}_+^I,\mathbb{R}_+)\cap C^1((\mathbb{R}_+^*)^I,\mathbb{R}_+),
\end{align}
\begin{align}\label{eq:r_i_reg}
r_i^\pm\in C^0(\mathbb{R}_+^I,\mathbb{R}_+)\cap C^1((\mathbb{R}_+^*)^I,\mathbb{R}_+),
\end{align}
We will also make the following assumption on $A$: 
\begin{align}\label{eq:Ainversible}
A:\mathbb{R}_+^I \mapsto \mathbb{R}_+^I\text{ is a homeomorphism.}
\end{align}

\paragraph{Entropy dissipation control}\
\begin{definition}\label{def:entropy}
We say that the system \eqref{eq:cross_diff_laplace} admits a nondegenerate entropy if there exists a convex $ C^2 $functional $\mathcal{H}:(\mathbb{R}_+^*)^I\mapsto \mathbb{R}_+$ such that 
\begin{align}
D^2\mathcal{H}(U)DA(U)>0. \label{eq:entropy_dissip}
\end{align} 
It is said to be compatible with $R$ if we have additionally 
\begin{align}
\nabla\mathcal{H}(U).R(U)\leq C_\mathcal{H}\left(1+\sum U_i+\mathcal{H}(U)\right)\label{eq:entropy_compatible}
\end{align}
The entropy is said to be uniform is there exists positive continuous function $f_i$ such that if we denote $\mathcal{D}_f$ the diagonal matrix with $(\mathcal{D}_f)_{ij}=f_i(u_i)\delta_{i=j}$, we have
\begin{align}
D^2\mathcal{H}(U)DA(U)\geq \mathcal{D}_f. \label{eq:entropy_uniform}
\end{align} 
In the sense of symmetric matrices.
\end{definition}
\begin{definition}
The reaction terms are called mass controlling if there exists a positive constant vector $\Phi>$ and a constant $C_R\geq 0$ such that 
\begin{align}\label{eq:masscontrol}
\forall U\geq 0,\quad \sum_i R_i(U)\leq C_R(1+\sum u_i).
\end{align}
\end{definition}
Note that the hypothesis \eqref{eq:masscontrol} immediately imply the following estimates
\begin{align}
\int_\Omega u_i(t)\leq Ke^{C_Rt}, \label{eq:mass_control_estimate}\\
\int_\Omega \mathcal{H}(U)(t) +\int_0^t e^{C(t-s)}\int_\Omega \nabla U D^2\mathcal{H}(U)DA(U)\nabla U\leq e^{Ct}\int_\Omega \mathcal{H}(U^0)+K' e^{(C+C_R)t}\label{eq:entropy_control_estimate}
\end{align}
The hypothesis on the $p_i$ together with \eqref{eq:masscontrol} ensure the following time and space estimate which is at the heart of our construction 
\begin{align}\label{eq:dualite}
\int_0^T\int_\Omega (\sum u_i\sum p_i u_i)&+\int_\Omega\left|\nabla\int_0^T \sum p_i u_i e^{-C_R (t-s)}\right|^2 \nonumber\\
&\leq C(R,A,\Omega,T)\left(\|U^0\|_1+\left\|\sum u_i^0-\langle \sum u_i^0\rangle\right\|_{H^{-1}(\Omega)}\right).
\end{align}
While computation leading to \eqref{eq:mass_control_estimate},\eqref{eq:entropy_control_estimate} are quite standard, we remind shortly in the appendix the arguments leading to \eqref{eq:dualite}.

We remind here the existence results that is allowed by such a structure. 
\begin{theorem}[\cite{lepoutre2017entropic}]\label{thm:mainLepMou}
  Let $\Omega$ be a smooth domain. Assume \eqref{eq:p_i_reg},\eqref{eq:r_i_reg},\eqref{eq:Ainversible} and that there exists uniform compatible entropy (satisfying \eqref{eq:entropy_dissip},\eqref{eq:entropy_compatible},\eqref{eq:entropy_uniform}). 
Assume finally that the function $R$ satisfies (for some norm $\|\cdot\|$ on $\mathbb{R}^I$)
\begin{align}\label{eq:equiint}
\| R(X)\|=\textnormal{o}\left(\left(\sum_{i=1}^I p_i(X)x_i\right)\left(\sum_{i=1}^I x_i\right)+\mathcal{H}(X)\right), \text{ as } \|X\| \rightarrow \infty.
\end{align}
Then, for any $0\leq U_{\textnormal{in}} \in L^1(\Omega)\cap  H^{-1}(\Omega)$,
 such that $ \mathcal{H}(U_{\textnormal{in}})\in L^1(\Omega)$, 
 there exists $0\leq U \in L^1(Q_T)$ such that $A(U)\in L^1(Q_T)$ and $R(U)\in L^1(Q_T)$ which is a weak solution of system \eqref{eq:cross_diff_laplace} with initial data $U_{\textnormal{in}}$ and homogeneous Neuman boundary conditions, \emph{i.e.} for  all $\Psi\in C^1_c([0,T); C^2(\overline{\Omega})^I)$ satisfying $\partial_n \Psi = 0$ on $\partial \Omega$, there holds 
 \begin{align}\label{eq:veryweak}
- \int_\Omega U_{\textnormal{in}} \cdot\Psi(0,\cdot) = \int_{Q_T}\Big( U.\partial_t\Psi+A(U) \cdot \Delta \Psi + R(U) \cdot \Psi\Big).
\end{align}
Moreover, this solution satisfies the following estimate on $[0,T]$:
\begin{align}
\int_\Omega\mathcal{H}(U(t)) +\int_0^t\int_{\Omega} \langle \nabla U, D^2(\mathcal{H})(U) D(A)(U)\nabla U\rangle  &\leq (1+e^{CT})\left(1+\int_\Omega \mathcal{H}(U_{\textnormal{in}})\right),
\label{eq:gradient_control}
\end{align}
where $C$ is a combination of $C_\mathcal{H}$ and $C_R$. 
\end{theorem}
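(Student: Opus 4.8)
The plan is to construct the weak solution by the implicit-in-time (Rothe) semi-discretization alluded to in the introduction, deriving a priori estimates uniform in the time step and then passing to the limit. Fixing $\tau = T/N$ and writing $U^k$ for the approximation at time $k\tau$, I would solve at each step the elliptic system $U^k - \tau \Delta A(U^k) = U^{k-1} + \tau R(U^k)$ with homogeneous Neumann conditions, starting from a regularized $U^0$ with $\mathcal{H}(U^0)\in L^1$ and $U^0 \in L^1\cap H^{-1}$. The solvability of each step is where the structural hypotheses enter: the assumption that $A$ is a homeomorphism \eqref{eq:Ainversible} lets one reformulate the step in the variable $A(U^k)$, and the nondegenerate entropy \eqref{eq:entropy_dissip} provides the coercivity and monotonicity needed to run a Leray--Schauder or topological-degree argument, yielding a nonnegative solution $U^k \geq 0$.

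Second, I would establish the discrete analogues of the three a priori estimates stated before the theorem, uniformly in $\tau$. Summing all $I$ equations of the scheme and integrating over $\Omega$ kills the Laplacian terms by the Neumann condition, and the mass-control hypothesis \eqref{eq:masscontrol} then gives, by a discrete Gronwall argument, the uniform $L^\infty_t L^1_x$ bound of \eqref{eq:mass_control_estimate} on $\sum_i u_i$. Testing the scheme against the state-gradient $\nabla\mathcal{H}(U^k)$ and using the discrete convexity inequality $\mathcal{H}(U^k)-\mathcal{H}(U^{k-1})\leq \nabla\mathcal{H}(U^k)\cdot(U^k-U^{k-1})$ together with the compatibility \eqref{eq:entropy_compatible} produces a discrete entropy inequality; because the entropy is uniform \eqref{eq:entropy_uniform}, the dissipation controls $\sum_i f_i(u_i)|\nabla u_i|^2$, hence bounds $\nabla\phi_i(u_i)$ in $L^2(Q_T)$ for the primitives $\phi_i'=\sqrt{f_i}$. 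Finally, the discrete duality argument (the $p=2$ case, with no loss of constant as noted in the introduction) yields the uniform bound \eqref{eq:dualite}, i.e. control of the product $\big(\sum u_i\big)\big(\sum p_i u_i\big)$ in $L^1(Q_T)$.

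Third, I would assemble these into compactness and equi-integrability. The gradient bound on $\phi_i(u_i)$ gives spatial compactness; combined with the uniform-in-$\tau$ control of the discrete time increments $U^k-U^{k-1}=\tau\big(\Delta A(U^k)+R(U^k)\big)$ coming directly from the scheme (the right-hand side being bounded in a negative Sobolev space via the $L^2$ bound on $\nabla\int\sum p_i u_i$ and the $L^1$ bound on $R$), a discrete Aubin--Lions--Dubinskii argument yields strong $L^1(Q_T)$ and almost-everywhere convergence of a subsequence of the piecewise-constant interpolants $U_\tau$. The crucial point for the reaction term is that the uniform $L^1$ bounds on $\big(\sum u_i\big)\big(\sum p_i u_i\big)$ and on $\mathcal{H}(U)$ make the superlinear quantity on the right of \eqref{eq:equiint} bounded in $L^1(Q_T)$; the hypothesis that $\|R(X)\|$ is $\textnormal{o}$ of that quantity then forces, by a de la Vallée-Poussin / Dunford--Pettis argument, the uniform integrability of $R(U_\tau)$.

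The hard part, and the step I expect to dominate the work, is precisely this passage to the limit in the nonlinear terms $A(U)$ and $R(U)$: weak bounds alone are insufficient, so one genuinely needs the almost-everywhere convergence supplied by the nonlinear compactness argument, and one must verify that the degenerate dissipation form still delivers enough spatial regularity where some $u_i$ vanish, the entropy being only $C^2$ on the open positive cone. Granting a.e. convergence and equi-integrability, Vitali's theorem upgrades the convergence of $R(U_\tau)$ and $A(U_\tau)$ to strong $L^1(Q_T)$ convergence toward $R(U)$ and $A(U)$, so one may pass to the limit in the discrete weak formulation to obtain \eqref{eq:veryweak}. The estimate \eqref{eq:gradient_control} then follows by taking the limit in the discrete entropy inequality, using weak lower semicontinuity of the convex dissipation functional $\langle \nabla U, D^2\mathcal{H}(U) D A(U)\nabla U\rangle$ together with the a.e. convergence of $U_\tau$.
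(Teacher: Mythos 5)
Your outline matches the approach the paper relies on: Theorem~\ref{thm:mainLepMou} is quoted from \cite{lepoutre2017entropic} rather than reproved here, but the construction the paper recalls in Section~\ref{sec:cross_appli} --- the backward Euler/Rothe scheme \eqref{eq:Rothe}, the discrete mass, entropy and $L^2$ duality estimates of Lemma~\ref{lem:approx}, almost-everywhere convergence of the piecewise-constant interpolants, equi-integrability of $R(U^\tau)$ via \eqref{eq:equiint} and Vitali, and lower semicontinuity for \eqref{eq:gradient_control} --- is exactly what you describe. Your flagged concerns (degeneracy where some $u_i$ vanish, strict positivity of the discrete iterates) are likewise handled in the cited construction by the property $\inf_\Omega\min_i u_i^k>0$.
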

\begin{remark}
For sake of simplicity we have chosen a mass control but everything done here works if we replace \eqref{eq:masscontrol} by the existence of a positive vector $\phi>0$ such that $\phi.R(U)\leq C_R(1+\phi.U)$. Essentially, all the sums in \eqref{eq:dualite} have to be replaced by weighted sums $(\sum v_i\rightarrow \sum \phi_i v_i)$. 
\end{remark}
This results is very large and its main constraint is in practice the control of reaction. In most situation of interest, the equation \eqref{eq:equiint} does not allow to treat standard logistic reaction terms. For the system \eqref{eq:SKT} it is not a real problem, because additional equiintegrability is directly given from the entropy dissipation inequality \cite{Chen2006} offering a gain of a priori $L^2log L$ integrability. In \cite{Lepoutre_Desvillettes_Moussa_Trescases}, we have treated  a general case for power like $p_i$. Let us consider the system,
\begin{align}\label{eq:SKTconcave}
\begin{cases}
\partial_t u-\Delta \left(d_1+v^\alpha\right)u=u(1-u-v),\\
\partial_t v-\Delta \left(d_2+u^\beta\right)v=0,\\
\partial_nu=\partial_nv =0,\quad \text{ on }\partial\Omega.
\end{cases}
\end{align}
As soon as we have $\alpha\beta\leq 1$, the system possesses an entropy structure satisfying \eqref{eq:entropy_compatible} given by 
$$
\mathcal{H}(U)= \frac{u^\beta-\beta u+\beta-1}{\beta(\beta-1)}+ \frac{v^\alpha-\alpha u+\alpha-1}{\alpha(\alpha-1)},
$$
Surprisingly, when $\beta$ is large, the control given by the entropy allows to obtain the necessary control \eqref{eq:equiint} to treat the quadratic terms $u^2$ in the reaction. When $\beta$ is small, we can not establish existence through theorem~\ref{thm:mainLepMou}. The results from \cite{lepoutre2017entropic} however can cover quadratic reaction terms when the entropy gives an important control or in presence of self-diffusion (as a verification that $|X|^2$ satisfy then \eqref{eq:equiint}). In low dimension the gradient control coming out of \eqref{eq:entropy_control_estimate} can give enough additional integrability through Sobolev emebeddings. What follows gives a solution for the case where one replaces $u^\beta$ by a bounded function (keeping all the other hypotheses true).

\subsection{Improved duality estimates: continuous and discrete case.}
In \cite{canizo2014improved}, a breakthrough was obtained and applied to reaction diffusion systems. Firslty one needs to introduce an important notation 
\begin{definition}\label{def:Cmp}
Let $\Omega$ be a smooth domain of $\mathbb{R}^N$, for all $p\in]1,\infty[$ and $m>0$ there exists a constant denoted $C_{m,p}$ independent of $T$ such that the solution to 
$$
\begin{cases}
\partial_t w-m\Delta w=f,\\
\partial_n w=0,\\
w(0,x)=0.
\end{cases}
$$
satisifies 
$$
\|\Delta w\|_{L^p(Q_T)}\leq C_{m,p} \|f\|_{L^p(Q_T)}.  
$$
\end{definition}
In \cite{canizo2014improved} it is applied to equalities, so we prefer to refer to an inequality version in the form of a stability principle. 
\begin{lemma}[adapted from Proposition 2.5 in \cite{breden2017smoothness}]\label{lem:dualiteamelioree}
Let $M\geq 0$ be a lower and upper bounded function:  $0<a\leq M(t,x)\leq b<+\infty$. Let $p\in]1,\infty[$ satisfy  
\begin{align}\label{eq:condCab}
\frac{b-a}{2}C_{\frac{a+b}{2},p}<1.
\end{align}
Assume $u\geq 0$ satisifies weakly
$$ 
\begin{cases}
\partial_tu-\Delta Mu \leq C(1+u),\\
\partial_n (Mu)=0,\\
u^0\in L^{p}(\Omega),
\end{cases} 
$$
then $u\in L^p (Q_T)$ and we have the following a priori estimate 
$$
\|u\|_{L^p(Q_T)}\leq C(1+\|u^0\|_{L^p(\Omega)}),
$$
where $C$ depends only on $\Omega,a,b,T$.
\end{lemma}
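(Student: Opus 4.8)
The plan is to run the improved duality argument of \cite{canizo2014improved} in its dual–test–function form, replacing the exact maximal regularity \emph{equality} used there by the \emph{stability} estimate encoded in the constant $C_{m,p}$ of Definition~\ref{def:Cmp}. As a preliminary reduction I would first remove the zeroth-order reaction term by the exponential substitution $w = e^{-Ct}u$: a direct computation gives $\partial_t w - \Delta(Mw) \le Ce^{-Ct} =: f$ with $0 \le f \le C$, $w(0,\cdot) = u^0$, and the Neumann condition $\partial_n(Mw) = 0$ is preserved. Since $\|u\|_{L^p(Q_T)} \le e^{CT}\|w\|_{L^p(Q_T)}$ and $\|f\|_{L^p(Q_T)} \le C|Q_T|^{1/p}$, it suffices to prove the estimate for $w$ with a bounded source and no linear term.

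Because $w \ge 0$, I would write $\|w\|_{L^p(Q_T)} = \sup\{\int_{Q_T} wg : 0 \le g \in L^{p'}(Q_T),\ \|g\|_{L^{p'}} \le 1\}$ and, for each such $g$, introduce the dual solution $\phi$ of the backward problem $-\partial_t\phi - M\Delta\phi = g$, $\phi(T,\cdot)=0$, $\partial_n\phi = 0$. Dividing by $M$ shows this is uniformly parabolic with a nonnegative source, so the maximum principle yields $\phi \ge 0$; testing the differential inequality for $w$ against this nonnegative $\phi$ and integrating by parts twice (both Neumann conditions making the boundary terms vanish) gives
$$\int_{Q_T} w g \le \int_\Omega u^0\,\phi(0,\cdot) + \int_{Q_T} f\,\phi \le \|u^0\|_{L^p(\Omega)}\|\phi(0,\cdot)\|_{L^{p'}(\Omega)} + \|f\|_{L^p(Q_T)}\|\phi\|_{L^{p'}(Q_T)}.$$
Everything then reduces to bounding $\|\phi(0,\cdot)\|_{L^{p'}(\Omega)}$ and $\|\phi\|_{L^{p'}(Q_T)}$ by $\|g\|_{L^{p'}(Q_T)}$.

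The heart of the argument — and the step I expect to be most delicate — is the maximal regularity bound on $\Delta\phi$. Writing $\bar m = (a+b)/2$ and recasting the dual equation as $-\partial_t\phi - \bar m\Delta\phi = g + (M-\bar m)\Delta\phi$, I would view it (after the time reversal $t \mapsto T-t$, which turns it into a forward heat equation with zero initial data) through Definition~\ref{def:Cmp}. Using the duality identity $C_{\bar m,p'} = C_{\bar m,p}$ for the maximal regularity constants — the operator $f \mapsto \Delta w$ being, up to time reversal, self-adjoint, so that its $L^p$ and $L^{p'}$ operator norms coincide — together with $|M - \bar m| \le (b-a)/2$, I obtain
$$\|\Delta\phi\|_{L^{p'}(Q_T)} \le C_{\bar m,p}\Big(\|g\|_{L^{p'}(Q_T)} + \tfrac{b-a}{2}\|\Delta\phi\|_{L^{p'}(Q_T)}\Big),$$
and the smallness hypothesis \eqref{eq:condCab}, $\theta := \tfrac{b-a}{2}C_{\bar m,p} < 1$, lets me absorb the last term and conclude $\|\Delta\phi\|_{L^{p'}(Q_T)} \le \frac{C_{\bar m,p}}{1-\theta}\|g\|_{L^{p'}(Q_T)}$. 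The same fixed-point / Neumann-series structure simultaneously furnishes existence of $\phi$ in the right class.

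Finally I would convert this into the two desired bounds. Integrating the dual equation over $\Omega$, where $\int_\Omega\Delta\phi = 0$, controls the spatial average $\int_\Omega\phi(t,\cdot) = \int_t^T\int_\Omega g$; combining with the elliptic Neumann estimate $\|\phi(t,\cdot) - \langle\phi(t,\cdot)\rangle\|_{L^{p'}(\Omega)} \le C(\Omega)\|\Delta\phi(t,\cdot)\|_{L^{p'}(\Omega)}$ and integrating in time yields $\|\phi\|_{L^{p'}(Q_T)} \le C(\Omega,T)\|g\|_{L^{p'}(Q_T)}$. For the initial trace I would use $\phi(0,\cdot) = \int_0^T(g + M\Delta\phi)\,dt$, so that Hölder in time and $M \le b$ give $\|\phi(0,\cdot)\|_{L^{p'}(\Omega)} \le T^{1/p}\big(\|g\|_{L^{p'}(Q_T)} + b\|\Delta\phi\|_{L^{p'}(Q_T)}\big) \le C\|g\|_{L^{p'}(Q_T)}$. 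Inserting these into the duality inequality, taking the supremum over $g$, and undoing the exponential substitution gives $\|u\|_{L^p(Q_T)} \le C(\Omega,a,b,T)(1 + \|u^0\|_{L^p(\Omega)})$. A routine approximation (smoothing $g$ and $M$ so that $\phi$ is a legitimate smooth test function, then passing to the limit) makes the formal testing rigorous.
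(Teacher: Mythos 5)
Your argument is correct and is essentially the approach the paper relies on: the paper does not reprove this continuous lemma (it imports it from \cite{breden2017smoothness}/\cite{canizo2014improved}), but its proof of the discrete analogue uses exactly your scheme --- decompose the dual operator around the midpoint $(a+b)/2$, absorb the perturbation under \eqref{eq:condCab} via the duality identity $C_{m,p}=C_{m,p'}$, and control $\phi(0,\cdot)$ and $\|\phi\|_{L^{p'}(Q_T)}$ from the time-integrated representation of the dual solution before pairing with the nonnegative test function. One minor slip worth fixing: $\int_\Omega M\Delta\phi$ does not vanish (only $\int_\Omega \Delta\phi$ does), so $\int_\Omega\phi(t,\cdot)$ is not exactly $\int_t^T\int_\Omega g$; it is nonetheless controlled by $\|g\|_{L^{p'}(Q_T)}+b\|\Delta\phi\|_{L^{p'}(Q_T)}$, or you can bypass the average entirely by using your representation $\phi(t,\cdot)=\int_t^T(g+M\Delta\phi)\,ds$ for every $t$, so the conclusion is unaffected.
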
 

This results gives room to improvement of our results in the case of bounded coefficients. In principle, in case where \eqref{eq:masscontrol} is satisfied and the $p_i$ satisfy $0<a \leq p_i\leq b<+\infty$ and $p\in]2,\infty[$ satisfying 
\eqref{eq:condCab}, we shall be able to extend theorem~\ref{thm:mainLepMou} replacing condition \eqref{eq:equiint} by  $$U^0\in L^p(\Omega)^I, \|R(X)\|=o(1+|X|^p).$$

However, as it has been noticed in \cite{DesLepMou,Lepoutre_Desvillettes_Moussa_Trescases,lepoutre2017entropic}, the construction of solution to \eqref{eq:cross_diff_laplace} is not immediate. In particular, combining duality estimates and entropy dissipation is quite difficult. An approximation procedure for \eqref{eq:cross_diff_laplace} has been developed in \cite{Lepoutre_Desvillettes_Moussa_Trescases,lepoutre2017entropic}  that preserves both entropy dissipation and $L^2$ duality estimates.  This construction is based on a time implicit discretization, solving a Euler backwards version of \eqref{eq:cross_diff_laplace}. Time discrete equivalent of \eqref{eq:entropy_control_estimate} and (surprisingly)\eqref{eq:dualite} can be then derived. The adaptation of lemma~\ref{lem:dualiteamelioree} is in fact much more demanding. We shall see in the sequel that there is a discrete equivalent of  definition~\ref{def:Cmp} but there is no guarantee (apart for the fundamental case $p=2$) that the discrete equivalent of $C_{m,p}$ has the same value. This is why we have to restrict our result following the discrete version of the Meyers estimate whose proof can be found in \cite{2002_article_AshyrPW_NFAO} or \cite{2016_article_KovacLL_SJNA}.
\begin{lemma}[Ashyrakyev,Piskarev and Weis, Remark 5.2]

Let us denote $\Psi=\Psi(\tau, m,F)$  the solution of 
$$
\begin{cases}
\frac{\Psi^{k+1}-\Psi^{k}}{\tau} +m\Delta\Psi^{k}=F^{k+1},\\
\Psi^{N}=0,\; \partial_n\Psi^k=0.
\end{cases}
$$
then there exists a constant $K_{p,m}$ that depends only on $\Omega,p,m$ such that for any $F$ in $l^p(L^p)$, we have  $$\left(\sum_{k=0}^{N-1}\tau\|\Delta\Psi^k\|_{L^p( \Omega)}^p\right)^{1/p}\leq K_{m,p} \left(\sum_{k=0}^{N-1}\tau\| F^k\|_{L^p( \Omega)}^p\right)^{1/p}.$$
\end{lemma}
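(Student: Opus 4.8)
The plan is to recognize this inequality as the \emph{discrete maximal $L^p$-regularity} estimate for the backward Euler scheme and to deduce it from the corresponding continuous property of the Neumann Laplacian via a transference (discrete Fourier-multiplier) argument. First I would put the scheme in standard form. Since the terminal value $\Psi^N=0$ is prescribed and the recursion determines $\Psi^k$ from $\Psi^{k+1}$, setting $A:=-m\Delta$ (the Neumann Laplacian, a positive operator on $L^p(\Omega)$) one rewrites the scheme as $\Psi^k=(I+\tau A)^{-1}(\Psi^{k+1}-\tau F^{k+1})$, so each step is a contraction and the problem is well posed backwards. Reversing time through $\Phi^j:=\Psi^{N-j}$ and $g^j:=-F^{N-j+1}$ turns it into the classical implicit Euler scheme $\frac{\Phi^j-\Phi^{j-1}}{\tau}+A\Phi^j=g^j$, $\Phi^0=0$, and the quantity to be controlled becomes $\|A\Phi^j\|=m\|\Delta\Psi^{N-j}\|$. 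Up to a harmless relabelling of the $\ell^p$ sums, the claim is thus exactly $\big(\sum_j\tau\|A\Phi^j\|_{L^p}^p\big)^{1/p}\le C\big(\sum_j\tau\|g^j\|_{L^p}^p\big)^{1/p}$ with $C$ independent of $\tau$ and $N$.

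Second, I would invoke the two structural facts on which everything rests: on the UMD space $L^p(\Omega)$, $1<p<\infty$, the Neumann Laplacian generates a bounded analytic semigroup, the resolvent family $\{\lambda(\lambda+A)^{-1}:\lambda\in\Sigma_\phi\}$ is $R$-bounded for some angle $\phi>\pi/2$ (equivalently, $A$ has a bounded $H^\infty$-calculus), and consequently $A$ enjoys continuous maximal $L^p$-regularity by Weis' theorem. These are the continuous-time counterparts of the constant $C_{m,p}$ of Definition~\ref{def:Cmp}.

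Third, transference to the discrete setting. Writing the solution by discrete Duhamel, $A\Phi^j=-\tau\sum_{i=1}^{j}A(I+\tau A)^{-(j-i+1)}g^i$, exhibits the left-hand side as a discrete convolution whose symbol on the unit circle is built from the resolvent $(I+\tau A)^{-1}$. I would then apply the discrete operator-valued Fourier-multiplier theorem (Blunck; see also \cite{2016_article_KovacLL_SJNA}), whose hypotheses — a Marcinkiewicz-type $R$-boundedness of the symbol together with its discrete derivative — are verified uniformly in $\tau$ precisely from the $R$-sectoriality recalled above. This delivers the $\ell^p(L^p)$ bound with a $\tau$-independent constant $K_{m,p}$, which is the assertion.

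The main obstacle sits entirely at this last step. The convolution kernel $A(I+\tau A)^{-n}$ decays only like $(n\tau)^{-1}$, so it is not absolutely summable and a naive Young/$\ell^1$-convolution estimate diverges logarithmically; the sum is genuinely a discrete singular integral. One therefore cannot avoid the vector-valued harmonic-analysis machinery, and the delicate point is to certify the discrete Marcinkiewicz/$R$-boundedness condition for the symbol \emph{uniformly in the step size $\tau$}. This is exactly where the UMD property of $L^p(\Omega)$ and the $R$-boundedness (not merely norm-boundedness) of the resolvent of the Neumann Laplacian become indispensable, and where the cited references \cite{2002_article_AshyrPW_NFAO,2016_article_KovacLL_SJNA} carry out the technical estimates.
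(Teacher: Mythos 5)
The paper gives no proof of this lemma at all: it is imported from the literature, with the argument delegated to \cite{2002_article_AshyrPW_NFAO} (Remark 5.2) and \cite{2016_article_KovacLL_SJNA}. Your proposal is therefore not competing with an in-paper argument but with those references, and as a reconstruction of their strategy it is faithful: the time reversal $\Phi^j=\Psi^{N-j}$ reducing the terminal-value scheme to the standard implicit Euler scheme with $\Phi^0=0$, the identification of $A\Phi^j$ as a discrete singular convolution whose kernel $A(I+\tau A)^{-n}$ decays only like $(n\tau)^{-1}$ and hence is not absolutely summable, and the appeal to the discrete operator-valued multiplier theorem (Blunck) on the UMD space $L^p(\Omega)$, with $\tau$-uniformity coming from the $R$-sectoriality of the Neumann Laplacian, is exactly how discrete maximal $\ell^p(L^p)$-regularity is established there. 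Be aware, though, that your text remains a roadmap: the step that carries all the weight --- verifying the discrete Marcinkiewicz/$R$-boundedness condition for the symbol uniformly in $\tau$ and extracting a constant independent of $N$ --- is asserted and referred back to the same sources, which is precisely the level of detail the paper itself adopts by citing rather than proving. Two minor slips, neither of which affects the argument: the discrete Duhamel formula should read $A\Phi^j=\tau\sum_{i=1}^{j}A(I+\tau A)^{-(j-i+1)}g^i$ (no minus sign once $g^j=-F^{N-j+1}$ has absorbed it), and the Neumann Laplacian is only nonnegative rather than positive (constants lie in its kernel), so the $N$-independence of $K_{m,p}$ must rest on the homogeneous estimate for $\|A\Phi\|$ rather than on invertibility of $A$.
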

As for the continuous case \cite{1987_article_Lambe_JoFA}, it is remarkable that the constant does not depend on the time horizon ($T$ or $N$).
\subsection{Application to cross diffusion systems.}
We are now in position to state our main theorem 
\begin{theorem}\label{th:main}
Let the hypothesis of theorem~\ref{thm:mainLepMou} hold. Assume additionally that the $p_i$ are bounded from above and below 
\begin{align}
0<a\leq p_i(U)\leq b<+\infty.\label{eq:p_iborne}
\end{align}
Assume $p\in]2,\infty[$ is such that,
\begin{align}
\frac{b-a}{2}K_{a,b,p}<1\label{eq:condKab}.
\end{align}
Then  the conclusion of the theorem~\ref{thm:mainLepMou} holds true adding the hypothesis $U^0\in L^p(\Omega)^I$ and replacing \eqref{eq:equiint} by 
\begin{align}\label{eq:hypRp}
|R(U)|=o(|U|^p).
\end{align}
\end{theorem}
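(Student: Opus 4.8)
The plan is to reproduce the implicit-in-time (Rothe) construction of \cite{Lepoutre_Desvillettes_Moussa_Trescases,lepoutre2017entropic} that already yields Theorem~\ref{thm:mainLepMou}, and to insert one new uniform-in-$\tau$ estimate: a \emph{discrete} analogue of Lemma~\ref{lem:dualiteamelioree} giving an $L^p(Q_T)$ bound on the discrete solution. Fix a time step $\tau=T/N$ and recall that at each step one solves the backward-Euler system $\tau^{-1}(U^{k+1}-U^k)-\Delta A(U^{k+1})=R(U^{k+1})$ with homogeneous Neumann conditions; existence, nonnegativity, the discrete mass control coming from \eqref{eq:masscontrol}, the discrete entropy dissipation (the time-discrete form of \eqref{eq:entropy_control_estimate}) and the discrete $L^2$ duality bound (the time-discrete form of \eqref{eq:dualite}) are all available from the cited works. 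The only genuinely new point is to upgrade this $L^2$ control to $L^p$ using the boundedness \eqref{eq:p_iborne} and the smallness \eqref{eq:condKab}.

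The core step is the discrete improved duality estimate. Summing the $I$ discrete equations and writing $w^k=\sum_i u_i^k$, the total-mass variable solves a \emph{scalar} discrete parabolic equation $\tau^{-1}(w^{k+1}-w^k)-\Delta(M^{k+1}w^{k+1})=\sum_i R_i(U^{k+1})$ with effective diffusion $M^k=(\sum_i p_i(U^k)u_i^k)/w^k$, which is a convex combination of the $p_i$ and hence satisfies $a\le M^k\le b$ by \eqref{eq:p_iborne}; moreover $\sum_i R_i(U^{k+1})\le C_R(1+w^{k+1})$ by \eqref{eq:masscontrol}. This is exactly the discrete counterpart of the hypothesis of Lemma~\ref{lem:dualiteamelioree}. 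I would then run the duality argument intrinsically at the discrete level: test against the solution $\Psi$ of the backward scheme of the Ashyrakyev--Piskarev--Weis lemma, split $M=\bar m+(M-\bar m)$ with $\bar m=(a+b)/2$ and $|M-\bar m|\le\frac{b-a}{2}$, and absorb the variable part using the discrete maximal-regularity bound $\big(\sum_k\tau\|\Delta\Psi^k\|_p^p\big)^{1/p}\le K_{a,b,p}\big(\sum_k\tau\|F^k\|_p^p\big)^{1/p}$ together with the contraction condition \eqref{eq:condKab}. Treating the lower-order feedback $C_R(1+w)$ by a discrete Gronwall / exponential-weight argument yields $\big(\sum_k\tau\|w^k\|_{L^p(\Omega)}^p\big)^{1/p}\le C(1+\|w^0\|_{L^p(\Omega)})$ with $C=C(\Omega,a,b,T)$ independent of $\tau$. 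Since $0\le u_i^k\le w^k$, this gives a uniform-in-$\tau$ discrete $L^p(Q_T)$ bound on $U_\tau$, hence on $A(U_\tau)$ by \eqref{eq:p_iborne}.

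It then remains to close the existence argument by passing to the limit $\tau\to0$. The growth condition \eqref{eq:hypRp}, i.e. $|R(U)|=o(|U|^p)$, combined with the uniform $L^p$ bound on $U_\tau$, makes $\{R(U_\tau)\}$ equi-integrable in $L^1(Q_T)$: for $|U|\ge K$ one has $|R(U)|\le\varepsilon(K)|U|^p$ with $\varepsilon(K)\to0$, so the mass of $R(U_\tau)$ on large values is uniformly small. As in \cite{lepoutre2017entropic}, the discrete entropy dissipation provides gradient compactness and hence a.e.\ convergence of $U_\tau$ along a subsequence; equi-integrability then upgrades this to $L^1(Q_T)$ convergence of $R(U_\tau)$ by Vitali, while the $L^p$ bound gives weak convergence of $U_\tau$ and $A(U_\tau)$. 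Passing to the limit in the time-interpolated weak formulation recovers \eqref{eq:veryweak}, and the entropy estimate \eqref{eq:gradient_control} is preserved by lower semicontinuity.

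The main obstacle is the second step. As the authors emphasise, the discrete maximal-regularity constant $K_{a,b,p}$ is not known to coincide with the continuous constant $C_{(a+b)/2,p}$ (except at $p=2$), so one cannot obtain the discrete estimate by passing to the limit in Lemma~\ref{lem:dualiteamelioree}; the entire duality/contraction argument must be carried out at the discrete level, which is precisely why the hypothesis is phrased through \eqref{eq:condKab} rather than \eqref{eq:condCab}. The delicate bookkeeping is to match the \emph{backward} discrete adjoint problem of the stated Meyers lemma (terminal condition $\Psi^N=0$) with the \emph{forward} discrete equation for $w^k$ so that discrete summation-by-parts produces no uncontrolled boundary-in-time terms, and to check that the effective coefficient $M^k$ is a legitimate $[a,b]$-valued multiplier at every step (fixing its value arbitrarily in $[a,b]$ where $w^k=0$). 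A secondary point, already settled at the $L^2$ level in \cite{Lepoutre_Desvillettes_Moussa_Trescases}, is that all constants must be tracked to be independent of $\tau$ so that the bound survives the limit.
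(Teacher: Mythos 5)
Your proposal follows essentially the same route as the paper: prove the improved duality estimate intrinsically at the discrete level using the discrete maximal-regularity constant $K_{m,p}$ and the absorption argument around the midpoint coefficient $\frac{a+b}{2}$, apply it to the total mass $w^k=\sum_i u_i^k$ of the Rothe scheme (whose effective diffusion is a convex combination of the $p_i$ and hence lies in $[a,b]$), and pass to the limit via a.e.\ convergence plus Vitali equiintegrability supplied by $|R(U)|=o(|U|^p)$ and the uniform $L^p$ bound. The paper's argument is exactly this, down to the exponential reweighting $v^k=(1-C\tau)^k u^k$ you describe as a discrete Gronwall step for the lower-order feedback.
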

\begin{remark}
As it is the case for \eqref{eq:condCab}, we shall see that there always exists $p$ such that \eqref{eq:condKab} holds true since it is always valid for $p=2$. 
\end{remark}

\section{Improved duality estimates: discrete  case\label{sec:duality}.}

\subsection{The estimates on dual problem.}

As for the time continuous case, the crucial point is that the constant does not depend on the horizon (represented here by $N$ instead of $T$) nor the time step $\tau$. The result can in fact be generalized to $L^p(0,T;L^q(\Omega)$ spaces. Note that the dependency on $m$ takes the form $K_{m,p}\leq K_{1,p}/m$. 
 
Following the lines of \cite{canizo2014improved}, we give an estimate of the value of $K_{m,2}$. 
\begin{lemma}
The constant $K_{m,p}$ satisfies 
$$
K_{m,p}=\frac{K_{1,p}}{m}.
$$
The case $p=2$ is given by 
$$
K_{m,2}=\frac{1}{m}
$$
\end{lemma}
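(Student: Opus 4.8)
\emph{Proof plan.} The plan is to first establish the identity $K_{m,p}=K_{1,p}/m$ for \emph{every} $p$ by rescaling the time step, thereby reducing the whole statement to the computation of $K_{1,2}=1$. For the scaling, given data $(\tau,m,F)$ I would set $\sigma:=m\tau$ and $G^{k}:=F^{k}/m$; dividing the dual scheme by $m$ shows that $\Psi(\tau,m,F)$ solves the $m=1$ scheme with step $\sigma$ and forcing $G$, i.e. $\Psi(\tau,m,F)=\Psi(\sigma,1,G)$. Inserting this into the defining inequality for $K_{1,p}$ and noting that both discrete norms carry the same weight $\sigma=m\tau$, so the common factor $\sigma^{1/p}$ cancels, one obtains
\begin{align*}
\Big(\sum_{k=0}^{N-1}\tau\|\Delta\Psi^{k}\|_{L^p(\Omega)}^{p}\Big)^{1/p}\le K_{1,p}\Big(\sum_{k=0}^{N-1}\tau\Big\|\tfrac{F^{k}}{m}\Big\|_{L^p(\Omega)}^{p}\Big)^{1/p}=\frac{K_{1,p}}{m}\Big(\sum_{k=0}^{N-1}\tau\|F^{k}\|_{L^p(\Omega)}^{p}\Big)^{1/p}.
\end{align*}
Since $(\tau,m,F)\mapsto(\sigma,1,G)$ is a bijection, the reverse inequality holds as well, so the best constants satisfy $K_{m,p}=K_{1,p}/m$ exactly and it only remains to show $K_{1,2}=1$.

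For the upper bound $K_{m,2}\le 1/m$ I would carry out the duality computation of \cite{canizo2014improved} directly at the discrete level. Testing the scheme against $-\Delta\Psi^{k}$ and integrating by parts using $\partial_n\Psi^{k}=0$ gives
\begin{align*}
\frac1\tau\int_\Omega\nabla(\Psi^{k+1}-\Psi^{k})\cdot\nabla\Psi^{k}-m\,\|\Delta\Psi^{k}\|_{L^2(\Omega)}^{2}=-\int_\Omega F^{k+1}\,\Delta\Psi^{k}.
\end{align*}
Summing $\tau\sum_{k=0}^{N-1}$ and using the elementary identity $\nabla(\Psi^{k+1}-\Psi^{k})\cdot\nabla\Psi^{k}=\tfrac12|\nabla\Psi^{k+1}|^{2}-\tfrac12|\nabla\Psi^{k}|^{2}-\tfrac12|\nabla(\Psi^{k+1}-\Psi^{k})|^{2}$ makes the gradient term telescope; since $\Psi^{N}=0$ the telescoped contribution is $-\tfrac12\|\nabla\Psi^{0}\|_{L^2(\Omega)}^{2}\le 0$, and the discrete dissipation $\tfrac12\sum_{k}\|\nabla(\Psi^{k+1}-\Psi^{k})\|_{L^2(\Omega)}^{2}$ is nonnegative. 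Dropping both and applying a discrete Cauchy--Schwarz inequality (up to the harmless index shift in the forcing) yields $m\,\|\Delta\Psi\|_{\ell^2(L^2)}\le\|F\|_{\ell^2(L^2)}$, hence $K_{m,2}\le 1/m$ with a constant independent of $N$ and $\tau$, exactly as in the continuous case.

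To upgrade $\le$ to equality I would prove sharpness by spectral decomposition. Expanding on the Neumann eigenbasis $-\Delta e_{j}=\lambda_{j}e_{j}$ decouples the scheme into scalar backward recursions $\psi^{k+1}-(1+\tau\lambda)\psi^{k}=\tau f^{k+1}$ with $\psi^{N}=0$, whose solution is the discrete convolution $\psi^{k}=-\tau\sum_{l>k}(1+\tau\lambda)^{\,k-l}f^{l}$. The induced map $f\mapsto\Delta\Psi$ (with $\Delta\Psi^{k}=-\lambda\psi^{k}$) is a finite section of a semi-infinite Toeplitz operator whose symbol is $\lambda\tau\,e^{i\theta}/(1+\tau\lambda-e^{i\theta})$; its modulus peaks at $\theta=0$, where it equals $\lambda\tau/(\tau\lambda)=1$, for every $\lambda>0$ and every $\tau$. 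As $N\to\infty$ the finite-section norm converges to this supremum, forcing $K_{1,2}\ge 1$ and, with the previous step, giving $K_{1,2}=1$ and $K_{m,2}=1/m$. The main obstacle is precisely this last step: at finite $\tau$ the discrete dissipation $\sum_{k}\|\nabla(\Psi^{k+1}-\Psi^{k})\|_{L^2(\Omega)}^{2}$ is strictly positive, so equality is never attained, and one must control the convergence of the terminally constrained (hence non-Toeplitz) finite-section operator to its bi-infinite symbol in order to certify that $1/m$ is the sharp value and not merely an upper bound.
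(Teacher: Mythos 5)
Your proposal is correct, and it is both more complete and in places more elaborate than the paper's own proof. The paper only carries out the $p=2$ energy identity in detail (it squares the equation $\frac{\Psi^{k+1}-\Psi^k}{\tau}+m\Delta\Psi^k=F^{k+1}$, bounds the cross term by the telescoping gradient difference, sums using $\Psi^N=0$, and disposes of sharpness in one line by taking $N=1$ with $F^1$ a Neumann eigenfunction of large eigenvalue); the identity $K_{m,p}=K_{1,p}/m$ for general $p$ is merely asserted. Your rescaling argument $(\tau,m,F)\mapsto(m\tau,1,F/m)$ supplies the missing proof of that identity, and it is worth noting explicitly that it hinges on the cited Ashyralyev--Piskarev--Weis result being uniform in the step size and horizon, since the time step changes from $\tau$ to $m\tau$ under the map. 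Your upper bound for $p=2$ (testing against $-\Delta\Psi^k$ rather than squaring) is algebraically the same computation as the paper's, just organized as an inequality from the start; the paper's version additionally retains control of $\|(\Psi^{k+1}-\Psi^k)/\tau\|_{L^2}$, which you discard.

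The one place where you make life harder than necessary is sharpness. The "main obstacle" you identify --- controlling the convergence of the terminally constrained finite-section operator to its bi-infinite Toeplitz symbol --- dissolves entirely if you use the paper's example: take $N=1$ and $F^1=e_\lambda$ with $-\Delta e_\lambda=\lambda e_\lambda$. Then $\Psi^0=-\tau(1+m\tau\lambda)^{-1}e_\lambda$ and $m\|\Delta\Psi^0\|_{L^2}/\|F^1\|_{L^2}=m\lambda\tau/(1+m\tau\lambda)\to1$ as $\lambda\to\infty$, which already forces $K_{m,2}\geq1/m$ with no Toeplitz machinery. Your spectral computation is not wrong (the symbol $\lambda\tau e^{i\theta}/(1+\tau\lambda-e^{i\theta})$ does peak at value $1$ at $\theta=0$, and finite sections of Toeplitz operators do converge in norm), but it leaves the argument resting on a convergence step you acknowledge not having completed, whereas the one-step example closes the lemma immediately.
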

\begin{proof}
The proof is simply based on a the equality (putting both members to the square)
\begin{align*}
\int_\Omega \left(\frac{\Psi^{k+1}-\Psi^k}{\tau}\right)^2+m^2(\Delta\Psi^k)^2= \int_\Omega (F^{k+1})^2-m\int_\Omega \frac{\Psi^{k+1}-\Psi^k}{\tau}\Delta\Psi^{k}\\
\leq  \int_\Omega (F^{k+1})^2+\frac{m}{2\tau}\int_\Omega (\nabla\Psi^{k+1})^2-(\nabla\Psi^{k})^2
\end{align*}
Summation over $k$ gives the result (we remind that $\psi^N=0$). Note that even if we are only interested by the inequality, this is in fact an equality (we just need to consider $N=1$ and $F^1$ is an eigenvector of the laplacian associated to a large eigenvalue to approach equality case). \end{proof}

The second important point is just a consequence of interpolation between $L^p$ spaces. 
\begin{lemma}
Let $p'$ be the conjugate exponent of $p$ such that $1/p+1/p'=1$, then we have $K_{m,p}=K_{m,p'}$. Furthermore if we have $p<r<q$ and $0<\theta<1$ such that 
$$
\frac{1}{r}=\frac{\theta}{p}+\frac{1-\theta}{q},
$$
then we have 
$$
K_{m,r}\leq K_{m,p}^\theta K_{m,q}^{1-\theta}.
$$
\end{lemma}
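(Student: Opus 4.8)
The plan is to view both assertions as statements about a single bounded linear operator. Fix $N$ and $\tau$, let $X=\{0,\dots,N-1\}\times\Omega$ carry the product of the ($\tau$-weighted) counting measure in time and Lebesgue measure in space, so that $\ell^s(L^s(\Omega))=L^s(X)$ for every $s$, and let $\Lambda\colon F\mapsto(\Delta\Psi^k)_k$ be the operator sending a source to the Laplacian of the solution $\Psi=\Psi(\tau,m,F)$ of the backward scheme. Since $-\Delta$ is a nonnegative self-adjoint operator under homogeneous Neumann conditions, the implicit step $(I-\tau m\Delta)$ is boundedly invertible, so $\Lambda$ is a well-defined linear operator on each $L^s(X)$, and by the preceding lemma its $L^s(X)\to L^s(X)$ operator norm equals exactly $K_{m,s}$. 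Both claims then reduce to properties of $\|\Lambda\|_{L^s\to L^s}$.

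For the first identity I would use the elementary duality of operator norms on $L^p$, namely $\|\Lambda\|_{L^p\to L^p}=\|\Lambda^*\|_{L^{p'}\to L^{p'}}$, where $\Lambda^*$ is the Banach-space adjoint for the $L^2(X)$ pairing. The content is to identify $\Lambda^*$. Computing $\langle\Lambda F,G\rangle$ by discrete summation by parts (Abel summation) in the time index and by Green's formula in space, using self-adjointness of the Neumann Laplacian so that $\int_\Omega\Delta\Psi^k\,G^k=\int_\Omega\Psi^k\,\Delta G^k$, one finds that $\Lambda^*$ is the solution-and-Laplacian operator of the \emph{forward} implicit scheme, with all boundary contributions dropping out precisely because of the terminal condition $\Psi^N=0$ and the initial condition it produces for the adjoint. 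Finally, the time reversal $R\colon(v^k)_k\mapsto(v^{N-1-k})_k$ is an isometry of every $L^s(X)$ with $R^2=\mathrm{id}$, and a direct change of index shows that reversing time turns the forward scheme back into the backward scheme, i.e. $\Lambda^*=R\Lambda R$. Hence $\|\Lambda^*\|_{L^{p'}\to L^{p'}}=\|\Lambda\|_{L^{p'}\to L^{p'}}=K_{m,p'}$, and combining with the duality gives $K_{m,p}=K_{m,p'}$.

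For the interpolation inequality I would simply apply the Riesz--Thorin theorem to the single fixed operator $\Lambda$ on the scale $L^s(X)$. Knowing $\|\Lambda\|_{L^p\to L^p}=K_{m,p}$ and $\|\Lambda\|_{L^q\to L^q}=K_{m,q}$, and that $\frac1r=\frac\theta p+\frac{1-\theta}q$, Riesz--Thorin yields $\|\Lambda\|_{L^r\to L^r}\le K_{m,p}^{\theta}K_{m,q}^{1-\theta}$, which is exactly $K_{m,r}\le K_{m,p}^{\theta}K_{m,q}^{1-\theta}$; complexifying $F$ componentwise causes no difficulty since $\Lambda$ is defined by a real linear recurrence. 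It is worth stressing that, because $K_{m,p}$ and $K_{m,q}$ are independent of $N$ and $\tau$, so is the interpolated bound, which is the feature needed downstream.

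The step I expect to be delicate is the precise identification $\Lambda^*=R\Lambda R$ in the first part: one must keep careful track of the index shift between $F^{k+1}$ and $\Psi^k$, of the single implicit level on which $\Delta$ acts, and of the boundary terms in the Abel summation, in order to be sure that the adjoint lands in exactly the same operator family (hence the same constant $K_{m,\cdot}$) rather than a merely comparable one. Once that bookkeeping is settled, both conclusions follow immediately from $L^p$ duality and Riesz--Thorin.
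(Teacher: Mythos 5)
Your proof is correct and follows exactly the route the paper has in mind: the paper states this lemma without proof, remarking only that it is ``a consequence of interpolation between $L^p$ spaces,'' and your argument (realizing $\Delta\Psi^k$ as a single linear operator $\Lambda$ on $\ell^p(L^p)$, identifying $\Lambda^*$ as the time-reversed scheme via self-adjointness of the Neumann Laplacian, then applying $L^p$--$L^{p'}$ duality and Riesz--Thorin) supplies precisely the missing details. The only points worth making explicit are that $K_{m,p}$ must be read as the optimal constant, i.e.\ the supremum over $N$ and $\tau$ of the operator norms $\|\Lambda_{N,\tau}\|_{L^p\to L^p}$ (both duality and interpolation then pass to this supremum), and that the complexification needed for Riesz--Thorin does not increase the $L^p\to L^p$ norm (Marcinkiewicz--Zygmund with constant $1$ for equal exponents), so no spurious factor appears in $K_{m,r}\leq K_{m,p}^\theta K_{m,q}^{1-\theta}$.
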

This leads to the main consequence for adaptation of results of \cite{canizo2014improved}.
\begin{lemma}
Consider solutions of the problem 
\begin{align}\label{eq:dual_discret}
\begin{cases}
\frac{\Psi^{k+1}-\Psi^{k}}{\tau} +a^{k+1}\Delta\Psi^{k}=F^{k+1},\\
\Psi^{N}=0,\; \partial_n\Psi^k=0.
\end{cases}
\end{align}
with smooth $a^{k+1}$ satisfying $0<a<a^{k+1}<b<+\infty$, assume that $F\in l^p(L^p)$ with  $p$ satisfying 
$$
\frac{b-a}{2}K_{\frac{a+b}{2},p}<1,
$$
then we have the following estimates:
\begin{align}\label{eq:imp_laplace}
\left(\sum_{k=0}^{N-1} \tau \|\Delta\psi^k\|_{L^p(\Omega)}^p\right)^{1/p}\leq \bar D_{a,b,p}\left(\sum_0^{N-1} \tau \|F^{k+1}\|_{L^p(\Omega)}^p\right)^{1/p}
\end{align}
\begin{align}\label{eq:imp_dt}
\|\psi^0\|_{L^p(\Omega)}\leq \left(1+b\bar D_{a,b,p}\right)(N\tau)^{1/p'}\left(\sum_0^{N-1} \tau \|F^{k+1}\|_{L^p(\Omega)}^p\right)^{1/p}
\end{align}
where 
$$
\bar D_{a,b,p}=\frac{K_{\frac{a+b}{2},p}}{1-\frac{b-a}{2}K_{\frac{a+b}{2},p}}
$$
\end{lemma}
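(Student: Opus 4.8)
The plan is to treat the variable coefficient $a^{k+1}$ as a perturbation of the frozen midpoint value $m=\frac{a+b}{2}$, apply the constant-coefficient (Ashyrakyev--Piskarev--Weis) estimate with diffusivity $m$, and close by an absorption argument, exactly mirroring the continuous scheme of \cite{canizo2014improved}. First I would rewrite \eqref{eq:dual_discret} by adding and subtracting $m\Delta\Psi^k$, so that $\Psi$ solves the \emph{constant}-coefficient dual scheme
$$\frac{\Psi^{k+1}-\Psi^k}{\tau}+m\Delta\Psi^k = F^{k+1}-(a^{k+1}-m)\Delta\Psi^k =: G^{k+1}.$$
Before estimating, I would record that the claim is not circular: since the $a^{k+1}$ are smooth and bounded below, each backward step $(\mathrm{Id}-\tau a^{k+1}\Delta)\Psi^k=\Psi^{k+1}-\tau F^{k+1}$ is a coercive elliptic problem, hence every $\Psi^k$ is smooth and the \emph{finite} sum $X:=\big(\sum_k\tau\|\Delta\Psi^k\|_{L^p}^p\big)^{1/p}$ is finite a priori.

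Applying the frozen-coefficient lemma with diffusivity $m$ to $G$, followed by the triangle inequality in $l^p(L^p)$, yields
$$X \le K_{m,p}\Big(\big(\textstyle\sum_k\tau\|F^{k+1}\|_{L^p}^p\big)^{1/p}+\big(\textstyle\sum_k\tau\|(a^{k+1}-m)\Delta\Psi^k\|_{L^p}^p\big)^{1/p}\Big).$$
Because $m$ is the midpoint of $[a,b]$, one has $|a^{k+1}-m|\le\frac{b-a}{2}$ pointwise, so the second sum is bounded by $\frac{b-a}{2}X$. Writing $Y:=\big(\sum_k\tau\|F^{k+1}\|_{L^p}^p\big)^{1/p}$, this is $X\le K_{m,p}\big(Y+\tfrac{b-a}{2}X\big)$, and the smallness hypothesis $\frac{b-a}{2}K_{m,p}<1$ lets me absorb the $X$-term and divide, giving $X\le \bar D_{a,b,p}\,Y$, i.e. \eqref{eq:imp_laplace}.

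For \eqref{eq:imp_dt} I would exploit the terminal condition $\Psi^N=0$ and telescope, using the scheme for each increment $\Psi^{k+1}-\Psi^k=\tau(F^{k+1}-a^{k+1}\Delta\Psi^k)$:
$$\Psi^0=-\sum_{k=0}^{N-1}(\Psi^{k+1}-\Psi^k)=\sum_{k=0}^{N-1}\tau\big(a^{k+1}\Delta\Psi^k-F^{k+1}\big).$$
Taking the $L^p(\Omega)$ norm and using $a^{k+1}\le b$ gives $\|\Psi^0\|_{L^p}\le\sum_k\tau\big(b\|\Delta\Psi^k\|_{L^p}+\|F^{k+1}\|_{L^p}\big)$. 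A H\"older inequality in the discrete time variable (counting measure weighted by $\tau$, total mass $N\tau$) converts this $l^1$ sum into $(N\tau)^{1/p'}$ times an $l^p(L^p)$ norm, and a final triangle inequality in $l^p(L^p)$ bounds that norm by $bX+Y\le(1+b\bar D_{a,b,p})Y$, using the estimate just proved; this is precisely \eqref{eq:imp_dt}.

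The only delicate point is the absorption step, which is meaningful only once $X<\infty$ is known. The main obstacle is therefore to justify this finiteness cleanly, and the key observation is that here it comes for free: there are finitely many time slices and each $\Delta\Psi^k$ is smooth by elliptic regularity of the backward step, so no separate regularization or fixed-point iteration is needed, in contrast with the continuous setting where Lemma~\ref{lem:dualiteamelioree} genuinely rests on a stability principle.
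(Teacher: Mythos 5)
Your proposal is correct and follows essentially the same route as the paper: the same decomposition around the midpoint diffusivity $m=\frac{a+b}{2}$, the same absorption argument using $|a^{k+1}-m|\le\frac{b-a}{2}$ and the smallness hypothesis, and the same telescoping-plus-H\"older argument for \eqref{eq:imp_dt}. Your explicit remark that the a priori finiteness of $\big(\sum_k\tau\|\Delta\Psi^k\|_{L^p}^p\big)^{1/p}$ (needed to make the absorption legitimate) is automatic in the discrete setting is a worthwhile addition that the paper leaves implicit.
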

\begin{proof}
The proof  follows the lines of the proof of lemma 2.2 in  \cite{canizo2014improved}.  We simply write 
$$
\frac{\Psi^{k+1}-\Psi^{k}}{\tau} +\frac{a+b}{2}\Delta\Psi^{k}=F^{k+1}+\left(\frac{a+b}{2}-a^{k+1}\right)\Delta\Psi^{k}
$$
Using the previous lemma, we have immediately
\begin{align*}
\left(\sum_{k=0}^{N-1} \tau \|\Delta\psi^k\|_{L^p(\Omega)}\right)^{1/p}&\leq K_{\frac{a+b}{2},p}\left(\sum_0^{N-1} \tau \left\|F^{k+1}+\left(\frac{a+b}{2}-a^{k+1}\right)\Delta\Psi^{k}\right\|_{L^p(\Omega)}^p\right)^{1/p}
\\
&\leq K_{\frac{a+b}{2},p}\left(\sum_0^{N-1} \tau \|F^{k+1}\|_{L^p(\Omega)}^p\right)^{1/p}\\
&+K_{\frac{a+b}{2},p}\left(\sum_0^{N-1} \tau \left\|\frac{a+b}{2}-a^{k+1}\right\|_\infty\|\Delta\Psi^{k}\|_{L^p(\Omega)}^p\right)^{1/p}
\end{align*}
Since by construction, we have $$\left\|\frac{a+b}{2}-a^{k+1}\right\|_\infty\leq \frac{b-a}{2},$$
we end up with 
$$
\left(\sum_{k=0}^{N-1} \tau \|\Delta\psi^k\|_{L^p(\Omega)}\right)^{1/p}\left(1-\frac{b-a}{2}K_{\frac{a+b}{2},p}\right)\leq K_{\frac{a+b}{2},p}\left(\sum_0^{N-1} \tau \|F^{k+1}\|_{L^p(\Omega)}^p\right)^{1/p}.
$$
Leading immediately to equation\eqref{eq:imp_laplace}. To obtain \eqref{eq:imp_dt}, we remark 
\begin{align*}
\Psi^0=\sum_{k=0}^{N-1}\tau \left(F^{k+1}-a^{k+1}\Delta\Psi^{k}\right)
\end{align*}
Therefore, we have immediately 
\begin{align*}
\|\Psi^0\|_p&\leq\sum_{k=0}^{N-1}\tau \left(\|F^{k+1}\|_p+\|a^{k+1}\Delta\Psi^{k}\|_p\right)\\
&\leq \sum_{k=0}^{N-1}\tau \left(\|F^{k+1}\|_p+b\|\Delta\Psi^{k}\|_p\right)\\
&\leq (N\tau)^{1/p'}\left(\left(\sum_0^{N-1} \tau \|F^{k+1}\|_{L^p(\Omega)}^p\right)^{1/p}+b\left(\sum_{k=0}^{N-1} \tau \|\Delta\psi^k\|_{L^p(\Omega)}^p\right)^{1/p}\right).
\end{align*}
Applying \eqref{eq:imp_laplace} we obtain the result \eqref{eq:imp_dt} and thereby the lemma.

\end{proof}
\begin{remark}\label{rem:notequal}
Apart from the case $p=2$, we cannot insure the equality between $K_{m,p}$ and $C_{m,p}$ in general. Note that we have also by this mean a general estimate on $\|\Psi^k\|_p$
$$
\|\Psi^k\|_p\leq\left(1+\bar D_{a,b,p}\right)((N-k)\tau|\Omega|)^{1/p'}\left(\sum_k^{N-1} \tau \|F^{k+1}\|_{L^p(\Omega)}\right)^{1/p}
$$
\end{remark}
\subsection{Consequences for discretized parabolic problems.}

Consider a sequence of functions $u^k\geq 0$ (nonnegativity is crucial if we limit ourselves to inequalities) satisfying 
\begin{align}
\begin{cases}
\dfrac{u^{k+1}-u^k}{\tau}-\Delta a^{k+1} u^{k+1}\leq C(1+ u^{k+1}),\\
\partial_n (a^{k+1} u^{k+1})=0,\\
u^0\geq 0, \quad u^0 \in L^\infty(\Omega).
\end{cases}
\end{align}
for nonnegative functions $a^{k+1}$ satisfying 
$$
0<a\leq a^{k+1}\leq b <+\infty,
$$
and some nonegative constant $C\geq 0$, such that $C\tau<1$, then 
\begin{lemma}\label{lem:dualite_discrete}
Let $1<p<+\infty$ be such that 
$$
\frac{b-a}{2}K_{\frac{a+b}{2},p}<1.
$$
Then, the following estimate holds true
 \begin{align}\label{eq:dualite_discrete}\left(\sum_{k=1}^N\int_\Omega \tau |u^k|^{p'}\right)^{1/p'}\leq (1-C\tau)^{-N}\left(\|u^0\|_{p'}+C N\tau|\Omega|^{1/p'}\right)\left(1+\frac{\bar D_{a,b,p}}{1-C\tau}\right)(N\tau)^{1/p'}
 \end{align}
\end{lemma}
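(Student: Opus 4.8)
The plan is to exploit the duality structure: estimate the $l^{p'}(L^{p'})$ norm of $u=(u^k)$ by pairing it against an arbitrary nonnegative test sequence and bounding the pairing using the dual problem~\eqref{eq:dual_discret}, for which we already have the improved estimates~\eqref{eq:imp_laplace} and~\eqref{eq:imp_dt}. Concretely, I would take an arbitrary $0\leq F=(F^{k+1})_{0\leq k\leq N-1}\in l^p(L^p)$ with $\left(\sum_{k=0}^{N-1}\tau\|F^{k+1}\|_p^p\right)^{1/p}\leq 1$, let $\Psi=\Psi(\tau,a,F)$ solve the backward dual problem~\eqref{eq:dual_discret} with right-hand side $F$, and compute the discrete space-time pairing $\sum_{k=1}^N\tau\int_\Omega u^k F^k$. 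By duality it suffices to bound this pairing by the right-hand side of~\eqref{eq:dualite_discrete} uniformly over such $F$, and then take the supremum to recover the $l^{p'}(L^{p'})$ norm.

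The key computation is a discrete summation by parts. I would multiply the inequality $\frac{u^{k+1}-u^k}{\tau}-\Delta(a^{k+1}u^{k+1})\leq C(1+u^{k+1})$ by $\Psi^k\geq 0$ (positivity of $\Psi$ being needed precisely because we only have an inequality, which is why $u^k\geq 0$ and the comparison principle for the dual problem matter) and integrate over $\Omega$. Summing over $k$ and reorganizing the discrete time derivative and the Laplacian terms, the operator $\frac{u^{k+1}-u^k}{\tau}$ tested against $\Psi^k$ transfers onto $\frac{\Psi^{k+1}-\Psi^k}{\tau}$ tested against $u^{k+1}$, and the term $-\Delta(a^{k+1}u^{k+1})$ tested against $\Psi^k$ becomes $-a^{k+1}\Delta\Psi^k$ tested against $u^{k+1}$ after moving the Laplacian across (using the Neumann conditions $\partial_n(a^{k+1}u^{k+1})=0=\partial_n\Psi^k$ to kill boundary terms). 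The crucial cancellation is that $\frac{\Psi^{k+1}-\Psi^k}{\tau}+a^{k+1}\Delta\Psi^k=F^{k+1}$ by definition of the dual problem, so the main sum collapses to $\sum_{k}\tau\int_\Omega u^{k+1}F^{k+1}$, leaving boundary-in-time contributions from $\Psi^0$ (since $\Psi^N=0$) and the forcing $C(1+u^{k+1})\Psi^k$.

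Collecting terms, I expect an estimate of the schematic form
\begin{align*}
\sum_{k=1}^N\tau\int_\Omega u^kF^k\leq \int_\Omega u^0\Psi^0+C\sum_{k=0}^{N-1}\tau\int_\Omega(1+u^{k+1})\Psi^k,
\end{align*}
where the $\Psi^0$ term is controlled by H\"older and~\eqref{eq:imp_dt}, giving $\|u^0\|_{p'}\|\Psi^0\|_p\leq\|u^0\|_{p'}(1+b\bar D_{a,b,p})(N\tau)^{1/p'}$, and the constant-in-$u$ part of the forcing is controlled by $C(N\tau)|\Omega|^{1/p'}\|\Psi^k\|_p$-type bounds via the remark~\ref{rem:notequal} estimate on $\|\Psi^k\|_p$. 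The remaining term $C\sum_k\tau\int_\Omega u^{k+1}\Psi^k$ contains the unknown $u$ again; this is the term that produces the discrete Gr\"onwall factor. I would absorb it by treating it as a perturbation proportional to $C\tau$ times the quantity being estimated and iterating, which yields the geometric factor $(1-C\tau)^{-N}$ after summing the resulting discrete-in-time recursion.

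The main obstacle I anticipate is precisely this last step: separating the contribution of the forcing $Cu^{k+1}$ so that it can be reabsorbed into the left-hand side with the correct $(1-C\tau)^{-N}$ accumulation, rather than a cruder bound, while keeping all constants independent of $\tau$ and $N$ (the horizon-independence of $K_{\frac{a+b}{2},p}$ is what makes $\bar D_{a,b,p}$ uniform). A secondary technical point is justifying the positivity $\Psi^k\geq 0$ and the validity of the summation by parts at the level of weak solutions with only $L^\infty(\Omega)$ initial data and the integrability available for $u^k$; I would handle positivity via the maximum principle for the backward scheme~\eqref{eq:dual_discret} applied to $F\geq 0$, and the summation by parts by a standard approximation/density argument in the test function.
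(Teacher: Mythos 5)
Your overall architecture is the same as the paper's: test the discrete inequality against the solution $\Psi$ of the dual problem \eqref{eq:dual_discret}, sum by parts so that the dual equation collapses the main term to the pairing with $F$, and control the leftover terms via \eqref{eq:imp_dt} and Remark~\ref{rem:notequal}. But the step you yourself flag as the main obstacle --- absorbing $C\sum_k\tau\int_\Omega u^{k+1}\Psi^k$ --- is where your plan breaks down, and it is precisely the step the paper organizes the proof around. If you estimate that term by H\"older against the global quantity you are trying to bound, you get
\begin{align*}
C\sum_{k=0}^{N-1}\tau\int_\Omega u^{k+1}\Psi^k\;\leq\; C\Bigl(\sum_k\tau\|u^{k+1}\|_{p'}^{p'}\Bigr)^{1/p'}\Bigl(\sum_k\tau\|\Psi^k\|_{p}^{p}\Bigr)^{1/p}\;\leq\; C\,(1+\bar D_{a,b,p})\,N\tau\,\|u\|_{l^{p'}(L^{p'})}
\end{align*}
for unit-norm $F$. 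The coefficient in front of the unknown is of order $CT(1+\bar D_{a,b,p})$, not $C\tau$, so it cannot be absorbed into the left-hand side except for small $T$, and the duality pairing does not localize in time in a way that lets you run a genuine discrete Gr\"onwall recursion on partial sums (you would also need pointwise-in-time $L^{p'}$ control of $u^{k}$ to restart on subintervals, which the estimate does not provide). The ``geometric factor $(1-C\tau)^{-N}$'' does not come out of this absorption.

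The paper's resolution is to perform the exponential rescaling \emph{before} invoking duality: setting $v^k=(1-C\tau)^k u^k$ turns $\frac{u^{k+1}-u^k}{\tau}-Cu^{k+1}$ into $\frac{v^{k+1}-v^k}{\tau}$ exactly, so the transformed inequality has forcing $C(1-C\tau)^k\leq C$ with no $v$-dependence left on the right-hand side, at the price of replacing $a^{k+1}$ by $a^{k+1}/(1-C\tau)$. This is harmless because $K_{m,p}=K_{1,p}/m$ implies the smallness condition \eqref{eq:condKab} is invariant under this rescaling and $\bar D_{a/(1-C\tau),b/(1-C\tau),p}=\bar D_{a,b,p}/(1-C\tau)$; the factor $(1-C\tau)^{-N}$ then appears only when converting the bound on $v$ back to $u$. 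You should incorporate this substitution; without it the proof does not close. One further small point: with the dual problem written as in \eqref{eq:dual_discret}, taking $F\geq 0$ yields $\Psi\leq 0$ (since $\Psi^k=(I-\tau a^{k+1}\Delta)^{-1}(\Psi^{k+1}-\tau F^{k+1})$ and $\Psi^N=0$); the paper therefore takes $F^k\leq 0$ to get $\Psi^k\geq 0$ and pairs with $-\sum_k\tau\int_\Omega v^kF^k$. Your sign convention as written would flip the inequality when you multiply by $\Psi^k$.
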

\begin{proof}
Replacing $u^{k}$ by $v^k=(1-C\tau)^{k}u^k$ , we can replace the inequality by 
\begin{equation*}
\begin{cases}
\frac{v^{k+1}-v^k}{\tau}-\Delta \frac{a^{k+1}}{1-C\tau} u^{k+1}\leq C(1-C\tau)^{k},\\
\partial_n (a^{k+1} v^{k+1})=0,\\
v^0\geq 0, \quad v^0 \in L^\infty(\Omega).
\end{cases}
\end{equation*} 

We consider a test function $F^k\leq 0$. We introduce the solution of \eqref{eq:dual_discret}. It is straightforward that $\Psi^k\geq 0$. Therefore, multiplying one equation by $\Psi^k$ and the other by $v^{k+1}$ and summing up we have 
$$
-\sum_{k=1}^N\int_\Omega \tau v^kF^k\leq \int_\Omega v^0\Psi^0+C\sum_{k=0}^{N-1} \tau \Psi^k.
$$
By the previous results, we have then immediately 
\begin{align*}
-\sum_{k=1}^N\int_\Omega \tau v^kF^k & \leq \left(\|v^0\|_{p'}+C N\tau|\Omega|^{1/p'}\right)\max_k \|\Psi^k\|_p
\end{align*}
Combining this with remark~\ref{rem:notequal} and the fact that 
$$
\bar D_{\frac{a}{1-C\tau},\frac{b}{1-C\tau},p}=\frac{\bar D_{a,b,p}}{1-C\tau},
$$
we end up with 
\begin{align*}
-\sum_{k=1}^N\int_\Omega \tau v^kF^k&\leq \left(\|v^0\|_{p'}+C N\tau|\Omega|^{1/p'}\right)\left(1+\frac{\bar D_{a,b,p}}{1-C\tau}\right)(N\tau)^{1/p'}\left(\sum_0^{N-1} \tau \|F^{k+1}\|_{L^p(\Omega)}^p\right)^{1/p}
\end{align*}
Since $v^k\geq 0$ and the results holds for any $F^{k}\leq 0$ this leads to 
$$
\left(\sum_{k=1}^N\int_\Omega \tau |v^k|^{p'}\right)^{1/p'}\leq \left(\|v^0\|_{p'}+C N\tau|\Omega|^{1/p'}\right)\left(1+\frac{\bar D_{a,b,p}}{1-C\tau}\right)(N\tau)^{1/p'}.
$$

Which ends the proof of the lemma. 
\end{proof}
\section{Application to cross-diffusion system with bounded pressures.}\label{sec:cross_appli}

We present here the main application we have in mind concerning the discrete duality estimates. As mentioned above, one of the main difficulty is to extend estimates to the approximation procedure. 

\subsection{Small remarks on construction procedure from \cite{lepoutre2017entropic}.}

At the heart of construction procedure is the backward Euler (often called Rothe method) approximation scheme for the equation: 
\begin{align}\label{eq:Rothe}
\begin{cases}
\frac{u_i^k-u_i^{k-1}}{\tau}-\Delta p_i(U^k)u_i^k=R_i(U^k),\\
\partial_n u_i^k =0,\\
U^0\geq 0\text{ given. }
\end{cases}
\end{align}

We recall a general result on the construction procedure introduced in \cite{Lepoutre_Desvillettes_Moussa_Trescases} and extended in \cite{lepoutre2017entropic}. We adapt lemma~ from \cite{Lepoutre_Desvillettes_Moussa_Trescases}
\begin{lemma}\label{lem:approx}
Assume hypothesis \eqref{eq:p_i_reg},\eqref{eq:r_i_reg},\eqref{eq:Ainversible},\eqref{eq:masscontrol} hold true, assume $\tau$ satisifies $C_R\tau,C_\mathcal{H}\tau \leq 1/2$. Assume $U^0\geq 0, U^0\in L^\infty$ and $\int_\Omega U^0 >0$ (component by component), then there exists a sequence $(U^k)_{k\geq 1}$ solution of \eqref{eq:Rothe}. Moreover, this sequence satisifies the following properties depending on $\tau$
\begin{align*}
\forall k\geq 1,\forall p\in]1,+\infty[,\quad  p_i(U^k)u_i^k \in W^{2,p}(\Omega) ,\\
U^k\in L^\infty(\Omega;\mathbb{R}_+^I),\; \inf_\Omega \min_i u_i^k>0,\\
\end{align*}
and the following properties 
\begin{align*}
\|U^k\|_1\leq K (1-C_R\tau)^{-k},\\
\int_\Omega \mathcal{H}(U^N)+\sum_{k=1}^N\tau\int_\Omega \nabla U^{k}D^2\mathcal{H}(U^{k})DA(U^k)\nabla U^{k}\leq C(N\tau,U^0),\\
\sum_{k=1}^N\tau\int_\Omega \left(\sum_{i=1}^I p_i(U^k)u_i^k\right)\left(\sum_{i=1}^I u_i^k\right)\leq C(N\tau,U^0)
\end{align*}
Finally, if we denote $a=\min_i\inf_{R_+^I}p_i(U)>0$ and $b=\max_i\sup_{R_+^I}p_i(U)<+\infty$, for all $p>1$, such that $\frac{b-a}{2}K_{\frac{a+b}{2},p}<1$, we have 
$$
\sum_{k=1}^N \tau \left(\sum_{i=1}^I u_i^k\right)^p\leq C(N\tau,\|U^0\|_p,a,b,p).
$$
\end{lemma}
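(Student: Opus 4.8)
The final statement to prove is Lemma~\ref{lem:approx}, which asserts existence of the Rothe sequence $(U^k)$ together with four a~priori estimates: mass control, entropy dissipation, the $L^2$ duality bound, and---the genuinely new ingredient---the improved $L^p$ estimate $\sum_k \tau(\sum_i u_i^k)^p \leq C$.

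\medskip

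The plan is to separate the claim into two parts. The existence of the sequence and the first three estimates (mass control, discrete entropy dissipation, and the $L^2$ duality bound) are precisely the content of the construction procedure from \cite{Lepoutre_Desvillettes_Moussa_Trescases} and \cite{lepoutre2017entropic}, so I would invoke those references directly rather than reprove them; the regularity assertions $p_i(U^k)u_i^k \in W^{2,p}$ and the strict positivity $\inf_\Omega \min_i u_i^k > 0$ likewise come from the elliptic regularity and maximum-principle arguments already established there for the fixed-point solving one step of \eqref{eq:Rothe}. The real work is the last estimate, and for this the strategy is to reduce \eqref{eq:Rothe} to the scalar hypothesis of Lemma~\ref{lem:dualite_discrete}.

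\medskip

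First I would sum the equations in \eqref{eq:Rothe} over $i$. Writing $\sigma^k = \sum_i u_i^k$ and using that $A(U^k) = (p_i(U^k)u_i^k)_i$, the summed system reads
\begin{align*}
\frac{\sigma^k-\sigma^{k-1}}{\tau}-\Delta\Big(\sum_i p_i(U^k)u_i^k\Big) = \sum_i R_i(U^k).
\end{align*}
The mass-control hypothesis \eqref{eq:masscontrol} gives $\sum_i R_i(U^k)\leq C_R(1+\sigma^k)$, so $\sigma^k$ satisfies the differential inequality in the hypothesis of Lemma~\ref{lem:dualite_discrete} with constant $C=C_R$. The subtle point, and the step I expect to be the main obstacle, is to exhibit the scalar diffusion coefficient $a^{k+1}$: the summed flux is $\sum_i p_i(U^k)u_i^k$, not $(\text{coefficient})\cdot\sigma^k$, so I would define
\begin{align*}
a^k = \frac{\sum_i p_i(U^k)u_i^k}{\sum_i u_i^k},
\end{align*}
which is a weighted average of the $p_i(U^k)$ and therefore automatically satisfies $0<a\leq a^k\leq b<+\infty$ with the $a,b$ defined in the statement. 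One must check that $a^k$ is well defined and smooth: this uses exactly the strict positivity $\inf_\Omega \min_i u_i^k>0$ and the regularity $p_i(U^k)u_i^k\in W^{2,p}$ already secured above, which keep the denominator bounded away from zero and the quotient admissible as a coefficient. With the flux rewritten as $\Delta(a^k\sigma^k)$ and the Neumann condition $\partial_n(a^k\sigma^k)=\partial_n\sum_i p_i(U^k)u_i^k=0$ inherited from \eqref{eq:Rothe}, the sequence $\sigma^k\geq 0$ falls exactly under the framework preceding Lemma~\ref{lem:dualite_discrete}.

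\medskip

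Finally I would apply Lemma~\ref{lem:dualite_discrete} with the exponent $p'$ (its conclusion \eqref{eq:dualite_discrete} controls the $\ell^{p'}(L^{p'})$ norm), choosing $p$ so that the smallness condition $\frac{b-a}{2}K_{\frac{a+b}{2},p}<1$ holds---this is the hypothesis imposed in the statement. Since the right-hand side of \eqref{eq:dualite_discrete} depends only on $\|u^0\|_{p'}=\|\sigma^0\|_{p'}\leq C\|U^0\|_p$, on $N\tau$, on $|\Omega|$, and on $a,b,p$ through $\bar D_{a,b,p}$, with $(1-C_R\tau)^{-N}$ bounded in terms of $N\tau$ because $C_R\tau\leq 1/2$, the resulting bound is exactly $C(N\tau,\|U^0\|_p,a,b,p)$ as claimed. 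A small bookkeeping remark: the lemma is stated for exponent $p'$ conjugate to $p$, and one should confirm that the chosen $p>1$ and its conjugate play symmetric roles in the smallness condition, which is guaranteed by the identity $K_{m,p}=K_{m,p'}$ established earlier; this lets me read off the estimate for whichever of the two exponents is the target $L^p$ integrability.
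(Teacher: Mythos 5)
Your proposal is correct and follows essentially the route the paper intends: the existence, regularity, positivity, mass, entropy, and $L^2$ duality statements are imported from the constructions of the cited prior works, and the new $L^p$ bound is obtained exactly as you describe, by summing \eqref{eq:Rothe} over $i$, writing the flux as $a^k\sigma^k$ with the weighted-average coefficient $a^k=\sum_i p_i(U^k)u_i^k/\sum_i u_i^k\in[a,b]$, and applying Lemma~\ref{lem:dualite_discrete} (the paper itself does this with $w^k=\sum_i u_i^k$ in the subsequent convergence lemma). Your closing remark on the conjugate-exponent bookkeeping via $K_{m,p}=K_{m,p'}$ is precisely the point that makes the stated smallness condition in $p$ consistent with the $L^p$ (rather than $L^{p'}$) conclusion.
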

We let the reader notice that the sequence is defined for all $k>0$. We denote then the step function 
$$
U^\tau(t,x) =\sum_{k=0}^\infty U^{k+1}(x)\chi_{k\tau<t\leq (k+1)\tau}.
$$
It has been established in  \cite{Lepoutre_Desvillettes_Moussa_Trescases,lepoutre2017entropic}
that for $T>0$, we can extract a subsequence $U^{\tau_n}$ that converges almost surely to $U$. Using then the $L^2(Q_T)$ standard bounds 
\begin{align*}
U^\tau \rightarrow U \text{ in }L^r(Q_T),\quad \forall r<2\\
A(U^\tau) \rightarrow A(U) \text{ in }L^r(Q_T),\quad \forall r<2.
\end{align*}
 We can now complete this results by a $L^p$ integrability for suitable $p$.
 \begin{lemma}
The extraction $U^\tau$ converges also strongly in $L^p(Q_T)$ for any $p$ satisfying $\frac{b-a}{2}K_{\frac{a+b}{2},p}<1$. In particular, we have strong convergence for $p=2$.
\end{lemma}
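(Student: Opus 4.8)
\emph{Approach.} The plan is to upgrade the already-established almost everywhere convergence $U^{\tau_n}\to U$ on $Q_T$ to strong $L^p$ convergence by producing a uniform higher-integrability bound and then invoking Vitali's theorem. The preliminary step is to observe that the set of admissible exponents
\[
\mathcal{P}=\Big\{p\in\,]1,\infty[\ :\ \tfrac{b-a}{2}\,K_{\frac{a+b}{2},p}<1\Big\}
\]
is an \emph{open} interval containing $2$. Writing $s=1/p$, the interpolation estimate $K_{m,r}\leq K_{m,p}^{\theta}K_{m,q}^{1-\theta}$ shows that $s\mapsto \log K_{\frac{a+b}{2},1/s}$ is convex, so $g(s):=\tfrac{b-a}{2}K_{\frac{a+b}{2},1/s}$ is log-convex, hence continuous; since $K_{m,2}=1/m$ gives $g(1/2)=\tfrac{b-a}{a+b}<1$, the sublevel set $\{g<1\}$ is a nonempty open interval, symmetric about $s=1/2$ by the self-duality $K_{m,p}=K_{m,p'}$. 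Consequently, for any fixed $p\in\mathcal{P}$ one can choose $q\in\mathcal{P}$ with $q>p$.

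\emph{Uniform higher integrability.} For such $q$, Lemma~\ref{lem:approx} applies (the scheme is run with $U^0\in L^\infty(\Omega)$, so $\|U^0\|_q$ is finite) and yields, with $N\tau=T$ fixed,
\[
\|U^\tau\|_{L^q(Q_T)}^q=\sum_{k=1}^{N}\tau\int_\Omega\Big(\sum_{i=1}^I u_i^k\Big)^q\leq C\big(T,\|U^0\|_q,a,b,q\big),
\]
a bound \emph{independent of} $\tau$. Thus $\{U^{\tau_n}\}$ is bounded in $L^q(Q_T)$ with $q>p$, and for any measurable $E\subset Q_T$ Hölder's inequality with exponents $q/p$ and $(q/p)'$ gives
\[
\int_E|U^{\tau_n}|^p\leq |E|^{\,1-p/q}\,\|U^{\tau_n}\|_{L^q(Q_T)}^{p}\leq C\,|E|^{\,1-p/q},
\]
so the family $\{|U^{\tau_n}|^p\}_n$ is uniformly integrable on the finite-measure set $Q_T$.

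\emph{Conclusion.} Since $Q_T$ has finite measure, $U^{\tau_n}\to U$ almost everywhere, and $\{|U^{\tau_n}|^p\}_n$ is uniformly integrable, Vitali's convergence theorem gives $U\in L^p(Q_T)$ and $U^{\tau_n}\to U$ strongly in $L^p(Q_T)$. (Equivalently, weak lower semicontinuity of the $L^q$ norm first places $U\in L^q(Q_T)$; then $|U^{\tau_n}-U|^p$ is uniformly integrable and tends to $0$ almost everywhere, so its integral vanishes.) Taking $p=2\in\mathcal{P}$ yields the claimed strong $L^2(Q_T)$ convergence as a special case. The one delicate point is securing a \emph{strictly} larger admissible exponent $q>p$ with $q\in\mathcal{P}$: this is precisely where the openness of $\mathcal{P}$—namely the log-convexity and self-duality of $p\mapsto K_{\frac{a+b}{2},p}$ furnished by the interpolation lemma—is essential, since it provides the extra integrability that dominates the target $L^p$ norm and powers the Vitali argument.
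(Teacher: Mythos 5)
Your proof is correct and follows essentially the same route as the paper: pick a strictly larger admissible exponent $q>p$, invoke the discrete duality estimate (via Lemma~\ref{lem:approx}, which is exactly the consequence of Lemma~\ref{lem:dualite_discrete} the paper uses) to get a $\tau$-uniform $L^q(Q_T)$ bound, and conclude from almost everywhere convergence by uniform integrability and Vitali. The only difference is that you spell out, via the log-convexity and self-duality of $p\mapsto K_{\frac{a+b}{2},p}$, why the admissible set is open so that such a $q$ exists --- a step the paper merely asserts.
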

\begin{proof}
Let such $p$ be given, then there exists $q>p$ such that $(b-a)K_{\frac{a+b}{2},q}<1$. Then applying lemma~\ref{lem:dualite_discrete} to $w^k=\sum u_i^k$ and $q$ where the $u_i$ are solutions to \eqref{eq:Rothe}, we obtain a uniform estimate from \eqref{eq:dualite_discrete} for any $\tau\leq 2/C_R$.
$$
\|U^\tau\|_{L^q(Q_T)}\leq e^{2C_RT}\left(\|U^0\|_{q'}+C_RT|\Omega|^{1/q'}\right)(1+2\bar D_{a,b,q})T^{1/q'}.
$$
Combining this with the almost sure convergence, we conclude that the statement holds.
\end{proof}

It has been established in solutions of \eqref{eq:Rothe} converge to a very weak solution of \eqref{eq:cross_diff_laplace} . The discrete estimate lead to estimate \eqref{eq:masscontrol}\eqref{eq:entropy_control_estimate} and \eqref{eq:dualite}. Our contribution consists here in the additional convergence properties. 
\subsection{Examples.}

 We give two last simple examples that are not covered by theorem~\ref{thm:mainLepMou} but can be covered by theorem~\ref{th:main}.
\begin{align}\label{eq:SKTboundedquadratic}
\begin{cases}
\partial_t u-\Delta \left(d_1+\frac{v}{1+v}\right)u=u(1-u-v),\\
\partial_t v-\Delta \left(d_2+\frac{u}{1+u}\right)v=v(1-v-u),\\
\partial_nu=\partial_nv =0,\quad \text{ on }\partial\Omega.
\end{cases}
\end{align}
A very close but superquadratic example is the following

\begin{align}\label{eq:SKTbounded}
\begin{cases}
\partial_t u-\Delta \left(d_1+\frac{v}{1+v}\right)u=u(1-u\log(1+u)-v),\\
\partial_t v-\Delta \left(d_2+\frac{u}{1+u}\right)v=v(1-v-u),\\
\partial_nu=\partial_nv =0,\quad \text{ on }\partial\Omega.
\end{cases}
\end{align}
In both cases, the entropy verifies 
$$
\mathcal{H}(U)=u\log\frac{2u}{1+u}+\frac{1-u}{2}+v\log\frac{2v}{1+v}+\frac{1-v}{2},\quad \nabla\mathcal{H}=\begin{pmatrix}\log\frac{2u}{1+u}+\frac{1}{1+u}-\frac{1}{2}\\ \log\frac{2v}{1+v}+\frac{1}{1+v}-\frac{1}{2}\end{pmatrix}
$$
We restrict ourselves to a $L^\infty$ initial data for sake of clarity (it is clearly not optimal). 

We let the reader check that all structural hypothesis of theorem~\ref{thm:mainLepMou} are fullfilled at the notable exception of \eqref{eq:equiint}.  To apply theorem~\ref{th:main} we  recall from lemma~\ref{lem:dualite_discrete} that there exists $p>2$ such that \eqref{eq:condKab} holds true. Therefore, there exists $p>2$ (depending on $a,b$ and $\Omega$)  such that hypothesis holds true. As a consequence, the reaction terms are in both cases uniformly equiintegrable; thanks to Vitali theorem and almost everywhere convergence they converge in $L^1(Q_T)$. Estimate \eqref{eq:gradient_control} is then an extension of its discrete equivalent in lemma~\ref{lem:approx}. Since $U^\tau$ satisifies (convention $U=U^0$ for $t\in]-\tau,0]$)
$$
\begin{cases}
\frac{U^\tau(t)-U^\tau (t-\tau)}{\tau}-\Delta A(U^\tau(t))=R(U^\tau(t)), t>0,\text{ in }\Omega\\
\partial_nU^\tau =0,\text{ on }\partial\Omega
U^\tau=U^0,\; t\in]-\tau,0].
\end{cases}
$$
Multyipling by a test function as in theorem~\ref{th:main} and integrating by parts we have 
$$
-\int_\Omega U_{in}\left(\frac{1}{\tau}\int_{-\tau}^0 \Psi(t+\tau)dt\right)=\int_0^{T-\tau}\frac{\Psi(t+\tau)-\Psi(t)}{\tau} U^\tau(t) +\int_{Q_T}\Big(A(U^\tau)\Delta\Psi + R(U^\tau)\Psi\Big)
$$
Passing to the limit we obtain \eqref{eq:veryweak}.

There is a small difference anyway between the two cases:
\begin{itemize}
\item In the first situation the reaction terms are quadratic and we need to establish some strong convergence in $L^2(Q_T)$ from the approximated solutions. This difficulty can bedealt with by employing direct $L^2$ compactness arguments see \cite{moussa2017non,pierre2010global}.
\item In the second case \eqref{eq:SKTbounded}, strong compactness in $L^2$ is not sufficient  additional integrabilty is needed and our result is necessary to ensure in particular the equiintingrability of the reaction term $u^2\log(1+u)$. 
\end{itemize}
\begin{remark}
In general, the value of constant $K_{m,p}$ (or $C_{m,p}$) is not known. Moreover, its values (for $p\not=2$) depends on the domain. The most practical (meaning independent of the domain) application of hypothesis \eqref{eq:hypRp} is the case of possibly superquadratic reaction terms but stills satisfying 
$$
|R(U)|=o(|U|^p) \quad \forall p>2.
$$
Typically the application to cubic nonlinearities in reaction terms may depend on the domain. 
\end{remark}
\section{Conclusion.}
In this manuscript we have established a time-discrete version of the  improved duality estimate from \cite{canizo2014improved}. This allows to extend a little bit known results on cross-diffusion with bounded cross-diffusion pressures. A quite important open question is the optimal possible estimate. It remains to establish in lemma~\ref{lem:dualite_discrete} can hold replacing $K_{m,p}$ by $C_{m,p}$. We think there is hope for it   up to the price of a dependence on $\tau,N$ for the correction. Typically, we have in mind that there shall be room so that the optimal constant for $N\tau$ fixed (that is $T$ is fixed) could be in the limit $\tau\rightarrow+0$ bounded by $C_{m,p}$. If such a result was established, then we would be able to extend our results to the optimal condition replacing $K_{m,p}$ by $C_{m,p}$. An important open problem is the treatment of quadratic reaction for unbounded diffusion pressures. 

\textbf{Acknowledgement} This research was supported by the project Kibord ANR-13-BS01-0004 funded by the
French Ministry of Research. 
\bibliographystyle{abbrv}
\bibliography{cms_lepoutre_b}

          \end{document}